\def\Val{\mbox{\rm Val}}
\def\hoo{\mathfrak{h}}

\def\ZFC{\mbox{ZFC}}

\def\Mod{\mbox{Mod}}

\def\sol{second order logic}

\def\ma{\mathfrak{A}}
\def\mh{\mathfrak{H}}
\def\mb{\mathfrak{B}}
\def\mm{\mathfrak{M}}

\def\soc{second-order characterizable}

\def\qcat{quasi-categorical}
\def\wsoc{weakly second order characterizable}
\def\sock{$L^2_{\kappa\omega}$-characterizable}
\documentclass[11pt]{article}
\usepackage{amsthm,enumerate,comment} 
\usepackage[mathscr]{euscript}
\usepackage{amssymb,amsmath}
\usepackage{hyperref}
\usepackage{endnotes}
\newtheorem{theorem}{Theorem}
\newtheorem{definition}[theorem]{Definition}
\newtheorem{lemma}[theorem]{Lemma}
\newtheorem{corollary}[theorem]{Corollary}
\newtheorem{example}[theorem]{Example}

\def\P{\mathcal{P}}
\newcommand{\open}{\Bbb}

\newcommand{\oC}{{\open C}}

\newcommand{\oN}{{\open N}}
\newcommand{\oP}{{\open P}}
\newcommand{\oQ}{{\open Q}}
\newcommand{\oR}{{\open R}}

\begin{document}

\title{Model theory of second order logic\thanks{The  author would like to thank  the Academy of Finland, grant no: 322795, and funding from the European Research Council (ERC) under the
European Union’s Horizon 2020 research and innovation programme (grant agreement No
101020762). }}
\author{Jouko V\"a\"an\"anen\\
\\
University of Helsinki, Finland}
\date{}
\maketitle

\section{Introduction}

Identifying the model theory of second order logic may sound like an impossible mission. The common wisdom is that there is no such model theory.  Still, the final Open Problem in Chang-Keisler \cite{MR1059055} is: ``Develop model theory of second and higher order logic". 

It is clear that second order logic does not manifest model theory in the same sense as first order logic does.  In fact, much of currently popular first order model theory is based on the ability to eliminate first order quantifiers in certain situations. It is much more difficult, if not impossible, to eliminate second order quantifiers in any situation. Thus, if we are to find any model theory in second order logic, it is going to be model theory of a different kind.

It can be argued that first order model theory is totally based on the inability of first order logic to express things such as finiteness, well-foundedness, countability, and completeness of linear order. This inability has led to a rich structure theory which constitutes much of model theory---weakness has been turned into an asset.
Second order logic does not ``suffer" from such inability, and neither does it have  a rich structure theory, at least as far as can be seen today.

The most famous examples of second order axiomatizations are Dede\-kind's axiomatization of the arithmetic of natural numbers \cite{zbMATH02693454} and Hilbert's axiomatization of the arithmetic of real numbers \cite{Hilbert1900}. Both are categorical axiomatizations. Have they led to a better understanding of number theory or analysis? It is typical of textbooks of mathematics to give both axiomatizations as starting points. Arguably it has been beneficial in the development of mathematics to have these axiomatizations, but the actual progress in understanding the natural numbers or the real numbers is not based on methods of second order logic. In contrast, the study of \emph{first order} axiomatizations of natural numbers have led to insights such as G\"odel's Incompleteness Theorem and the Paris-Harrington Theorem.  In both cases the result is directly based on properties of first order logic.

What is then the use of second order model theory? As emphasised by Baldwin \cite{MR3230825}, what is interesting is not that there \emph{is} some categorical axiomatization for a structure\footnote{Our structures are the same as in first order model theory.} but that a particular natural axiomatization is in some sense categorical. This would seem to suggest that we should study particular theories rather than second order theories in general.

A special case is monadic \sol, by which we mean the fragment of second order logic which allows second order quantification over unary predicates only. This fragment is decidable on monadic structures \cite{zbMATH02615110}, on the successor structure $(\oN,s)$ \cite{MR0183636}, on the full infinite binary tree 
\cite{MR0231716}, on any countable ordinal $(\alpha,<)$ and on $(\omega_1,<)$ \cite{MR0476471}  (but $\ZFC$ cannot decide whether the monadic second order theory of $\omega_2$ is decidable \cite{MR704093}). In this paper we focus on non-monadic second order logic, even though the monadic case is highly interesting on particular structures. If the structure has something like a pairing-function, the monadic case encodes the non-monadic case and no advantage arises from restricting to it.

A well-known phenomenon in second order logic is the dependence on the background set theory in a way that modern model theorists usually find unsavoury. Indeed, the model theory of second order logic can be best described as set-theoretical model theory, that is, an investigation of properties of models and model classes using set-theoretical methods. Questions studied are related to definability, compactness, and cardinality. 

It is often thought and claimed that if we liberate second order logic from its entanglement with  set theory by admitting so-called Henkin models \cite{MR0036188}, we are squarely back in first order logic. The truth is different. Let us see in more detail why Henkin models are compared to first order logic. A Henkin model is a pair $(\ma,\mh)$, where $\ma$ is an ordinary model and $\mh$ is a set of subsets of the domain $A$ of $\ma$, relations on $A$ and functions on $A$. The Comprehension Axioms are assumed to be satisfied by $(\ma,\mh)$, meaning that at least all (second order) definable (with parameters) subsets, relations and functions are in $\mh$. In such definitions $\mh$ is used as the range of second order variables. We can interpret  a Henkin model $(\ma,\mh)$ as a two-sorted structure $\mb$ as follows. Let us assume, for the sake of simplicity, that $\mh$ consists only of (some) subsets of $A$. We can make $(\ma,\mh)$ a two-sorted structure in which sort $1$ is reserved for elements of $A$ and sort $2$ is reserved for the subsets in $\mh$. To create appropriate connection between the two sorts, a new binary relation $E$ is added such that $a E X$ holds exactly when $a\in A$, $X\in \mh$, and $a\in X$. Now, indeed, second order logic over $(\ma,\mh)$, i.e. second order logic over $\ma$ with second order variables ranging over $\mh$, can be reduced to first order logic over $\mb$. But $\mb$ is not just any first order structure. It has the binary predicate $E$ which codes a lot of combinatorial data. For example, $\mh$ necessarily contains all  finite subsets of $\ma$ because they are all definable (with parameters). Thus, if $\ma$ is infinite, $\mb$ is necessarily unstable. In fact, second order logic with Henkin models has a high degree of instability and is best compared to Peano arithmetic and ZFC set theory, rather than first order logic. In the light of this it should not come as a surprise that the model theory of second order logic looks at times like the model theory of set theory.

Suppose a model or a model class is definable in set theory. 
What more do we know about it if we know that it is definable in second order logic? The most dramatic difference concerns the structure $(\omega,+,\times)$: Assuming large cardinals, the second order theory of $(\omega,+,\times)$ is forcing absolute i.e. cannot be changed by forcing \cite{MR2723878}, something that is certainly  not true of set theoretic definability over this structure.

Questions of first order model theory that have, at least in the light of current knowledge, no relevance in second order logic, are: 

\begin{itemize}
\item Is the theory of a given model, e.g. $(\oQ,<)$, $(\oR,+,\times, e^x)$ or $(\oC,+,\times)$ decidable? (The second order theory of a structure is (rather trivially) decidable if and only if the structure is finite.) 

\item Is a given theory, such as dense linear order or algebraically closed fields,  decidable? (A second order theory is (rather trivially) decidable if and only if there is a natural number $n$ such that the theory has only models of  size $\le n$.) 
\end{itemize}

\noindent A question which is interesting in both first order and second order model theory, is:

\begin{itemize}
\item Given a complete theory, what is the structure of its models? (Naturally, complete second order theories have much \emph{fewer} models than complete first order theories.)
\end{itemize}

\noindent Genuinely second order model theory questions are the following:

\begin{itemize}
\item Which structures can be characterised up to isomorphism?
\item Which complete theories are categorical?
\item What are the Hanf-, L\"owenheim-, and compactness numbers of second order logic?
\item Under what conditions does a given model have a ``small" (not necessarily countable) elementary submodel?
\item Under what conditions does a given model have a ``big" (or even just a proper) elementary extension?
\item What is implicitly but not explicitly definable in second order logic?
\end{itemize}

This paper surveys answers to some (but not all) of these and  similar questions. It becomes now clear that in the case of second order logic the term ``model theory" gets a new meaning.  There are (some) general methods that can be used --- and are typical --- in the model theory of second order logic. These methods are by far not as powerful as the methods used in first order model theory. 

\emph{Notation:} Second order logic is denoted $L^2$. This logic has variables (and existential and universal quantifiers) for subsets, relations and functions on the domain of the model under consideration. If countable conjunctions and disjunctions are allowed, we denote the logic $L^2_{\omega_1\omega}$. Similarly $L^2_{\kappa\omega}$. For a general introduction to second order logic we refer to \cite{sep}.

\section{Second order characterizable structures}

In second order model theory categoricity is a dominating feature. Sentences with infinite models can be categorical, i.e. infinite models can be categorically characterized. Categoricity is indeed a typical feature in second order logic. Let us now introduce the relevant concepts:

\begin{definition}
A second order sentence\footnote{Or more generally a sentence of $L^2_{\kappa\omega}$.} is \emph{categorical}
if it has exactly one model up to isomorphism.
A model $\ma$ of a finite vocabulary is \emph{\soc} if it satisfies a categorical second order sentence $\theta_\ma$.
\end{definition}

For those accustomed to first order model theory this concept is shocking. In first order logic no infinite model is characterizable up to isomorphism, while all finite ones are. The truth is that in second order logic not only are there  \soc\ models but it is difficult to find one that is \emph{not} \soc.


Note the similarity with the Scott sentence of a countable model $\ma$ (\cite{MR0200133}). The difference is that our $\theta_\ma$ is a finite string of symbols while the Scott sentences are in the infinitary language $L_{\omega_1\omega}$. We could not have a ``Scott sentence" in second order logic for every countable model for cardinality reasons: There are continuum many non-isomorphic countable models, e.g. the models $(\oN,<,r)$, where $r\subseteq\oN$, but only countably many second order sentences. In fact, if $\Val^2$ is the set of G\"odel numbers of valid second order sentences, then $(\oN,<,\Val^2)$ is not \soc\ (see Theorem~\ref{oldmen} below).  An example of a non second order characterizable structure of a different kind is $(\kappa,<)$, where $\kappa$ is the first measurable cardinal $>\omega$
\cite{MR143710}.

If a model is \soc, then so are its reducts, which can be seen as follows: Suppose $\theta$ is a categorical sentence and $\ma\models\theta$. Let $\mb$ be the reduct of $\ma$ obtained by leaving out $R_1,\ldots, R_n$. Then $\mb$ satisfies the categorical sentence $\exists R_1\ldots\exists R_n\theta$. Note that this is not true in many-sorted vocabularies i.e. if the reduct is obtained by leaving out (also) some sorts, there is no way to say in second order logic that an expansion $\ma$ of the right kind exists. The reduct $\mb$ may be of the form $(\kappa,<)$, where $\kappa$ is the first measurable cardinal. Then $\mb$ is not \soc.  Still the many-sorted expansion $\ma$ may be \soc, for example it could be $(V_{\kappa+1},\in,\kappa,<)$.

Since reducts of \soc\ models are \soc, it makes sense to study the extreme case of  \soc\ models of the empty vocabulary. Then we talk about \emph{\soc\ cardinals} \cite{MR0416919}. If $\kappa$ is \soc, then 
so are $\kappa^+$ and $2^\kappa$. 
 In the opposite direction, it makes sense to try to find as rich \soc\ models as possible. 

The first and most famous example of a \soc\ model is $(\oN,s,0)$, where $s(n)=n+1$ \cite{zbMATH02693454}. This can be readily generalized. For example, if $f_1,\ldots, f_n$ are any recursive functions, then $$(\oN,f_1,\ldots,f_n)$$ is \soc. Another early and equally famous \soc\ model is  the ordered field of real numbers \cite{Hilbert1900} $$(\oR,+,\times,0,1).$$ Here, too, functions can be added to the structure as long as they are second order definable on the structure. Moreover, we can add the set of natural numbers and the resulting structure is \soc. In first order logic the field of reals is decidable, but if you add the natural numbers, the new structure is undecidable. This difference disappears in second order logic. The field of reals is exactly as undecidable in second order logic as the expansion by the set of natural numbers.

What would be an example of a model that is not \soc? By a cardinality argument we know that there must be a continuum of such models but it is not easy to put your finger on one. Constructions based on diagonalization, e.g the model $(\oN,<,\Val^2)$ (see Theorem~\ref{oldmen}), lead to such models  but what if we want a genuinely ``mathematical" example? This is a somewhat vague question. We give some examples in Section~\ref{non}, but only consistently.

A nice \emph{consequence} of second order characterizability of a model $\ma$ is the following: Suppose $\phi$ is a second order sentence in the vocabulary of $\ma$. Then $\ma\models\phi$ if and only if $\models\theta_\ma\to\phi$. Thus truth in $\ma$ is reduced to validity. Unfortunately second order logic has a Completeness Theorem only in so-called Henkin models. Thus, to use $\theta_\ma$ to derive the truth of $\phi$ in $\ma$, one has to check $\phi$ in all models of $\theta_\ma$, and alas, $\ma$ itself is one of those models. However, this is not as  circular as it appears to be. In `normal' cases $\theta_\ma\to\phi$ can be derived from the axioms of second order logic. If one does not like formal derivations, one can use informal semantic thinking and then appeal to the Henkin Completeness Theorem \cite{MR0036188}.

The second order theory of an infinite model is always undecidable because we can use second order quantifiers to ``guess" a copy of $(\oN,+,\times)$ inside the model (i.e. we use second order existential quantifiers to say that $\oN,+$ and $\times$ exist and are, up to isomorphism, what we usually mean by them), and then code the Halting Problem (or even $0^\sharp$ if it exists). This raises the question how complicated can the second order theory of a \soc\ structure be? Let us define for \soc\ models $\ma$ and $\mb$:
$$\ma\le_T\mb$$
if the second order theory of $\ma$ is Turing-reducible to the second order theory of $\mb$. Let us write $\ma\equiv_T\mb$ if $\ma\le_T\mb$ and $\mb\le_T\ma$.

\begin{theorem}[\cite{Vaananen2012-VNNSOL}]Suppose $\ma$ is \soc. 
\begin{enumerate}[(i)]
\item If $|A|\le|B|$,  then $\ma\le_T\mb$.
\item If $A$ is infinite and $\ma\le_T\mb$, then ${|A|}< 2^{|B|}$. 
\end{enumerate}
\end{theorem}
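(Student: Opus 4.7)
For part (i), the natural strategy is to interpret $\ma$ uniformly inside $\mb$ via second-order quantifiers, using the categoricity of $\theta_\ma$ in an essential way. Let $\sigma=\{R_1,\ldots,R_k\}$ be the (finite) vocabulary of $\ma$. For each $L^2(\sigma)$-sentence $\phi$ I would produce, in an effective way, a sentence $\phi^*$ in the vocabulary of $\mb$ by setting
$$\phi^* \;:=\; \exists U\,\exists R_1'\cdots\exists R_k'\,\bigl(\theta_\ma^{U}(R_1',\ldots,R_k') \wedge \phi^{U}(R_1',\ldots,R_k')\bigr),$$
where $U$ is a fresh unary predicate variable, the $R_i'$ are relation variables of matching arities ranging over relations on $U$, and $\psi^U$ denotes the relativisation of all first- and second-order quantifiers in $\psi$ to $U$ (respectively, to the subsets and relations over $U$). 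Because $|A|\le|B|$, we can inject $A$ into $B$ and push the structure of $\ma$ forward to a substructure of $\mb$, producing a witness for the outer existential quantifiers. Conversely, by categoricity of $\theta_\ma$, any witness $(U,R_1',\ldots,R_k')$ of $\theta_\ma^U$ is an isomorphic copy of $\ma$, so $\phi^U$ holds on it if and only if $\ma\models\phi$. Hence $\ma\models\phi\iff\mb\models\phi^*$, and the map $\phi\mapsto\phi^*$ is visibly computable, giving $\ma\le_T\mb$.

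For part (ii) I would pursue a preservation argument rooted in the observation that $\mathrm{Th}(\mb)$ is determined by the pair $(\mb,\mathcal{P}(B))$, which in the cumulative hierarchy lives at the level of cardinality $2^{|B|}$; intuitively, cardinalities $\ge 2^{|B|}$ should be invisible to $\mathrm{Th}(\mb)$. Concretely, take a forcing $P$ that is $2^{|B|}$-closed (for example the L\'evy collapse $\mathrm{Col}(2^{|B|},|A|)$) and that collapses $|A|$ down to $2^{|B|}$. Being $2^{|B|}$-closed, $P$ adds no new subsets of $B$, so $\mathcal{P}(B)$ and hence $\mathrm{Th}(\mb)$ are unchanged in $V[G]$. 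A fixed Turing reduction $M$ witnessing $\ma\le_T\mb$ is a finitary object and will still compute $M^{\mathrm{Th}(\mb)}=\mathrm{Th}^V(\ma)$ in $V[G]$; on the other hand $|A|$ has been collapsed, so the second-order theory of $\ma$ in $V[G]$ records a different cardinal-arithmetic situation. Comparing the two values should yield the contradiction, with the strict inequality $|A|<2^{|B|}$ reflecting the fact that the collapse brings $|A|$ down to exactly $2^{|B|}$, so the argument already rules out the boundary case.

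The main obstacle in (ii) is identifying a specific sentence $\phi$ in $\ma$'s vocabulary whose truth value genuinely flips between $V$ and $V[G]$, certifying that $\mathrm{Th}(\ma)$ has changed. This is delicate because $\theta_\ma$ can lose its categoricity in $V[G]$ (new subsets of $A$ can make previously categorical sentences non-categorical), so one must be careful about which second-order feature of $\ma$ is being probed. The cleanest route is probably to reduce to the case where the vocabulary of $\ma$ is empty -- using that reducts of \soc\ models are \soc\ -- so that $\ma$ becomes a pure set of cardinality $|A|$, a \soc\ cardinal; one then exploits closure of \soc\ cardinals under successor and power set (mentioned already in the excerpt) to fabricate a sentence that pins down the relation between $|A|$ and $2^{|B|}$ and whose truth changes under the collapse while $\mathrm{Th}(\mb)$ stays fixed.
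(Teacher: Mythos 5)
Your part (i) is exactly the paper's argument: relativize $\theta_\ma\wedge\phi$ to a guessed subset of $B$ equipped with guessed relations, and use the categoricity of $\theta_\ma$ to see that any witness of the relativized $\theta_\ma$ is an isomorphic copy of $\ma$. That part is fine.

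Part (ii) is where the proposal has genuine gaps, and the paper takes a different and much more robust route. Three concrete problems with your forcing argument. First, the contradiction does not close: the hypothesis $\ma\le_T\mb$ via a machine $M$ is a statement about $V$; after forcing you only know that $M$ applied to the unchanged oracle $\mathrm{Th}(\mb)$ still outputs the old set $\mathrm{Th}^V(\ma)$, but you have no license to assert that the reduction still computes $\mathrm{Th}^{V[G]}(\ma)$ in $V[G]$. Showing that the theory of $\ma$ changes therefore refutes nothing about the reduction in $V$. Second, the boundary case: the theorem asserts the \emph{strict} inequality $|A|<2^{|B|}$, so you must also exclude $|A|=2^{|B|}$; but then $\mathrm{Col}(2^{|B|},|A|)$ is trivial, nothing changes, and your argument says nothing. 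Third, even in the non-trivial case you never exhibit a sentence whose truth value flips, and you acknowledge this; after reducing to the empty vocabulary a second-order sentence can only see the cardinality of $A$, and it is not at all clear that some second-order property of a pure set must change when its cardinality is collapsed to $2^{|B|}$ --- this is precisely the missing idea, not a detail. The paper avoids forcing entirely: assuming $2^{|B|}\le|A|$, one can guess inside $\ma$ not only a copy of $\mb$ but also a coding of all of $\P(B^n)$ by elements of $A$ together with a satisfaction predicate, so that the full second order theory of $\mb$ becomes second order definable over $\ma$ (uniformly, with $\theta_\ma$ guaranteeing the definition is evaluated in the intended structure). If in addition $\mathrm{Th}(\ma)\le_T\mathrm{Th}(\mb)$, then $\mathrm{Th}(\ma)$ is second order definable over $\ma$, contradicting the standard undefinability-of-truth diagonalization. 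You should redirect part (ii) along these lines.
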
  

\begin{proof}
(i) If $|A|\le|B|$, then $\ma\models\phi$ can be expressed on $\mb$ by guessing a subset of the universe as well as relations on it such that  the subset with the relations satisfy $\theta_\ma\wedge\phi$. (ii) If $2^{|B|}\le |A|$, we can guess not only a copy of $\mb$ on $A$ but also a truth-definition for second order logic on $\mb$. Then $\ma\nleq_T\mb$ follows by a standard undefinability of truth argument.
\end{proof}

\begin{corollary}
Assume GCH. Then for infinite $\ma$ and $\mb$, such that $\ma$ is \soc: $$\ma\le_T\mb\iff |A|\le|B|.$$
\end{corollary}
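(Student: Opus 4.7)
The plan is to obtain the corollary as an essentially immediate consequence of the preceding theorem, with GCH being used only to pass from the strict cardinal inequality $|A|<2^{|B|}$ to the non-strict inequality $|A|\le|B|$. Accordingly I would split the biconditional into its two directions and handle them separately.

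For the direction $|A|\le|B|\Rightarrow\ma\le_T\mb$, no appeal to GCH is needed: this is exactly clause (i) of the theorem, applied to the \soc\ model $\ma$. (The infiniteness hypothesis is not used here either; it is only built into the framing of the corollary.)

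For the converse direction $\ma\le_T\mb\Rightarrow|A|\le|B|$, I would argue by contraposition, or equivalently directly as follows. Assume $\ma\le_T\mb$. Since $\ma$ is \soc\ and $A$ is infinite, clause (ii) of the theorem yields
\[
|A|<2^{|B|}.
\]
Because $\mb$ is infinite, $|B|$ is an infinite cardinal, and GCH gives $2^{|B|}=|B|^+$. Hence $|A|<|B|^+$, which by definition of the successor cardinal means $|A|\le|B|$, as required.

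There is no real obstacle: the content of the corollary lies entirely in the theorem, and GCH is invoked only to bridge the gap between clause (ii)'s bound $|A|<2^{|B|}$ and the desired $|A|\le|B|$. The only small thing worth flagging in the write-up is that the infiniteness hypothesis on $\mb$ (not merely on $\ma$) is genuinely needed, since GCH is a statement about infinite cardinals; and that the forward direction does not require GCH, so the use of GCH is one-sided.
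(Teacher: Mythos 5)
Your proof is correct and matches the argument the paper intends (the corollary is stated without proof, as an immediate consequence of the preceding theorem): direction (i) gives one implication outright, and GCH converts the bound $|A|<2^{|B|}=|B|^+$ from clause (ii) into $|A|\le|B|$. Your side remarks about where GCH and the infiniteness hypotheses are actually used are accurate.
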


The theorem shows that \soc\ models of the same cardinality are Turing-equivalent. Thus the infinite \soc\ models form a hierarchy, where the countably many countable ones are lowest, then the countably many \soc\ models of cardinality $\aleph_1$, then of cardinality $\aleph_2$, etc. On each level all the models are Turing equivalent and Turing reducible to the models on a higher level. In general we cannot say whether the models on the higher level are Turing reducible to the models on the lower level, but if they are exponentially bigger, then they are not. 

Since there are only countably many \soc\ cardinals, it makes sense to ask how far they reach in the proper class of all cardinals. The following concept helps in estimating this:

\begin{definition}
The {\em L\"owenheim number} $\ell^2$ of \sol\ is the smallest cardinal $\kappa$  such that if a second order sentence has a model, then it has a model of cardinality $<\kappa$.
\end{definition}

The L\"owenheim-number $\ell^2$ exists  because it is the supremum over all second order sentences $\phi$, which have a model, of the least cardinality of a model of $\phi$. It is not a particularly small cardinal as it is a fixed point (of fixed points of fixed points etc) of the $\beth$-function, but it is not a so-called \emph{large cardinal} as it has cofinality $\omega$, and its existence is provable in ZFC. Using the method of \cite{Barwise1972-BARTHN} 
one can show that the existence of $\ell^2$ \emph{cannot} be proved from $\Sigma_1$-replacement alone as $V_{\ell^2}$ is a model of $\Sigma_1$-replacement, while it can be readily proved
from $\Sigma_2$-replacement.

We can use $\ell^2$ to compute the supremum of all \soc\ cardinals  i.e. the supremum of cardinalities of \soc\ models:

\begin{theorem}
The supremum of the sizes of all \soc\ models is $\ell^2$.
\end{theorem}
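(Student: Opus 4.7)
The plan is to prove two inequalities. For the upper bound $\sup\le\ell^2$, I would take any \soc\ model $\ma$ with its categorical sentence $\theta_\ma$. Since $\theta_\ma$ is satisfiable (by $\ma$), the definition of $\ell^2$ hands us a model of $\theta_\ma$ of cardinality strictly below $\ell^2$; but every model of $\theta_\ma$ is isomorphic to $\ma$, so $|A|<\ell^2$. Taking the supremum over all \soc\ models gives $\sup\le\ell^2$.

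For the lower bound $\sup\ge\ell^2$, I would fix an arbitrary $\lambda<\ell^2$ and show there is a \soc\ model of size at least $\lambda$. By minimality of $\ell^2$, the cardinal $\lambda$ is not an upper bound, so there exists a satisfiable second order sentence $\phi$, say in vocabulary $\{R_1,\dots,R_n\}$, such that every model of $\phi$ has cardinality $\ge\lambda$. Let $\kappa$ be the least cardinality of a model of $\phi$, so $\kappa\ge\lambda$. The main idea is to construct, in the empty vocabulary, a categorical sentence whose unique model is the bare set of cardinality $\kappa$, thereby witnessing that $\kappa$ is a \soc\ cardinal.

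Specifically I would let $\psi$ be the second order sentence asserting
\[
\exists R_1\dots\exists R_n\,\phi(R_1,\dots,R_n)\ \wedge\ \neg\exists X\,\bigl(X\subsetneq\mathrm{dom}\wedge\exists R_1'\dots\exists R_n'\,\phi^X(R_1',\dots,R_n')\bigr),
\]
where $\phi^X$ is the usual second order relativization (quantifiers guarded by $X$, second order variables replaced by relations living on $X$, closure of $X$ under any function symbols enforced). A set $M$ of size $\kappa$ satisfies the first conjunct since $\kappa$ is achieved, and the second conjunct holds because any proper subset of $M$ has size $<\kappa$ and so cannot host a model of $\phi$ by minimality of $\kappa$. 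Conversely, a model of $\psi$ has size $\ge\kappa$ from the first conjunct, and size $\le\kappa$ because any $\kappa$-sized subset of a strictly larger set would support $\phi$, violating the second conjunct. Hence the models of $\psi$ are exactly the sets of cardinality $\kappa$, so $(\kappa,=)$ is \soc\ and witnesses $\sup\ge\kappa\ge\lambda$. Letting $\lambda\to\ell^2$ finishes the proof.

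The only nontrivial technical point is the construction of $\phi^X$ so as to internalize ``some subset of my domain supports a model of $\phi$'' in second order logic; this is routine once one notes that $n$-ary relations on $X\subseteq\mathrm{dom}$ are themselves $n$-ary relations on $\mathrm{dom}$ (contained in $X^n$), and function symbols can be encoded as graph relations with an additional clause asserting that $X$ is closed. Everything else is bookkeeping.
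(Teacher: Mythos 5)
Your overall strategy is exactly the paper's: the upper bound follows from categoricity of $\theta_\ma$ together with the definition of $\ell^2$, and the lower bound is obtained by taking the least cardinality $\kappa$ of a model of a sentence $\phi$ that has a model but none of size $<\lambda$, and then characterizing $\kappa$ as the unique cardinal which carries a model of $\phi$ but has no smaller subset carrying one. However, there is a genuine error in your formalization of the second conjunct of $\psi$. You guard $X$ by $X\subsetneq\mathrm{dom}$ and justify the verification with the claim that ``any proper subset of $M$ has size $<\kappa$''. This is false whenever $\kappa$ is infinite, which is the only interesting case: $M\setminus\{a\}$ is a proper subset of $M$ of the same cardinality $\kappa$, and since $\phi$ does have a model of cardinality $\kappa$, that model can be copied onto $M\setminus\{a\}$ via a bijection. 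Hence a set of size $\kappa$ does \emph{not} satisfy your $\psi$; indeed $\psi$ as written has no infinite models at all, so it is vacuously categorical but characterizes nothing, and the conclusion that $\kappa$ is a \soc\ cardinal does not follow.

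The repair is immediate and is what the paper's proof actually asserts: replace ``$X$ is a proper subset of the domain'' by ``$X$ is a subset of the domain of strictly smaller cardinality'', which is second order expressible (there is no injection of the domain into $X$, equivalently no bijection of $X$ onto the domain). With that change your verification goes through: minimality of $\kappa$ rules out models of $\phi$ on subsets of smaller cardinality, while any model of $\psi$ of size $>\kappa$ would contain a subset of size exactly $\kappa$, hence of strictly smaller cardinality than the domain, carrying a model of $\phi$, contradicting the second conjunct. Everything else in your write-up, including the upper bound and the reduction of the relativization $\phi^X$ to routine bookkeeping, is correct and matches the paper.
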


\begin{proof}
Firstly, $\ell^2$ clearly cannot be itself  \soc. Secondly, if  $\kappa$ is \soc\ then the second order sentence that characterises $\kappa$ must have a model $<\ell^2$, whence $\kappa<\ell^2$. Finally, suppose $\lambda<\ell^2$. We should find a \soc\ cardinal $\kappa$ such that $\lambda\le\kappa<\ell^2$. Since $\lambda<\ell^2$, there is a second order $\phi$ with a model but no models $<\lambda$. Let $\kappa$ be the least cardinality of a model of $\phi$. Clearly, $\lambda\le\kappa<\ell^2$. The cardinal $\kappa$ is \soc\ because it is the unique cardinal which on the one hand has relations which make $\phi$ true but on the other hand does not have a subset of smaller cardinality with relations which make $\phi$ true. 
\end{proof}


\section{Weakly second order characterizable structures}

Second order characterizability is, despite its ubiquitousness, too strong  to be of model theoretic interest---when you have just one model, up to isomorphism, it is hard to develop structure theory.   We introduce now a slightly weaker notion.
It turns out that second order characterizability of a structure is an amalgam of two things. One is the second order characterizability of the cardinality of the structure. The other is a weaker form of second order characterizability:

\begin{definition}
A second order sentence is  {\emph \qcat}
if it has at most one model in each cardinality, up to isomorphism. A model $\ma$ of a finite\footnote{We assume that the vocabulary is finite because a single sentence of second order logic can contain only finitely many symbols. It would be too much to ask that a single sentence fixes the interpretation of a non-logical symbol which does not even occur in the sentence.} vocabulary is \emph{\wsoc} if it satisfies a \qcat\ sentence. 
\end{definition}

In first order model theory quasi-categoricity is called \emph{total categoricity} when the theory or sentence has models in all infinite cardinalities. Moreover, in first order model theory there is Morley's Theorem: If a theory is $\kappa$-categorical for one uncountable $\kappa$ it is for all uncountable $\kappa$. This is of course not true in the second order case.

Every model of the empty vocabulary is \wsoc. Of course, a finite disjunction of categorical (or \qcat) sentences is \qcat. The most famous historical example of a \qcat\ sentence is the following:
 
\begin{example}[Zermelo \cite{zbMATH02562682}]
The conjunction of the finitely many axioms of the second order Zermelo-Fraenkel set theory $ZFC^2$ is \qcat. Its models are, up to isomorphism, of the form $(V_\kappa,\in)$, where $\kappa$ is strongly inaccessible. Thus all such models $(V_\kappa,\in)$ are \wsoc. Many of them are \soc, for example $(V_\kappa,\in)$, where $\kappa$ is the first (second, third, $\omega$'th, $\omega_1$'st, etc)  but not all, if there are uncountably many inaccessible cardinals.  
\end{example}

Note that the cardinality of a \soc\ model is always \soc\, but not conversely: Not all countable models are \soc\ although their cardinality is.

\begin{theorem}
A model is \soc\ if and only if it is \wsoc\ and its cardinality is \soc.
\end{theorem}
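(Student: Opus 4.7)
The plan is to prove the two directions separately, with the forward direction being essentially a bookkeeping exercise and the backward direction being where the content lies. Throughout, I will use the freedom to view a second order sentence in one vocabulary as a sentence in any larger vocabulary (it just ignores the extra symbols).

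For the forward direction, suppose $\ma$ is \soc\ via a categorical sentence $\theta_\ma$ in the vocabulary $\tau$ of $\ma$. Then $\theta_\ma$ vacuously has at most one model in each cardinality, so it is \qcat, which shows that $\ma$ is \wsoc. For the cardinality, let $R_1,\ldots,R_n$ enumerate the symbols of $\tau$ (possible since $\tau$ is finite) and form the sentence $\theta^* = \exists R_1 \ldots \exists R_n\, \theta_\ma$ in the empty vocabulary. Any model of $\theta^*$ admits an expansion satisfying $\theta_\ma$ and is therefore (as a set) of cardinality $|A|$; conversely, any set of cardinality $|A|$ carries an isomorphic copy of $\ma$, so satisfies $\theta^*$. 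Hence $\theta^*$ is categorical and witnesses that $|A|$ is a \soc\ cardinal.

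For the backward direction, suppose $\ma$ is \wsoc\ via a \qcat\ sentence $\phi$ in $\tau$, and suppose $|A|$ is \soc\ via a categorical sentence $\psi$ in the empty vocabulary. Regarding $\psi$ as a sentence in $\tau$ (it simply does not mention the non-logical symbols), consider
\[
\theta \;=\; \phi \wedge \psi.
\]
Clearly $\ma \models \theta$, since $\ma \models \phi$ and the empty-vocabulary reduct of $\ma$ has cardinality $|A|$ and so satisfies $\psi$. Conversely, if $\mb \models \theta$ then the empty-vocabulary reduct of $\mb$ satisfies $\psi$, so $|B| = |A|$; and $\mb \models \phi$ together with quasi-categoricity of $\phi$ forces $\mb \cong \ma$. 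Thus $\theta$ is categorical and $\ma$ is \soc.

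The only subtlety worth flagging is the vocabulary-bookkeeping in the two steps --- existentially closing off the symbols of $\tau$ in the forward direction, and reading an empty-vocabulary sentence as a $\tau$-sentence in the backward direction. Both are elementary because $\tau$ is finite, and neither introduces any set-theoretic assumption; so the proof is uniform in cardinality and completely constructive once $\theta_\ma$, $\phi$, and $\psi$ are in hand.
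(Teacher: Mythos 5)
Your proof is correct and follows essentially the same route as the paper: the forward direction uses the reduct trick (existentially quantifying away the vocabulary symbols to characterize the cardinality), and the backward direction takes the conjunction of the quasi-categorical sentence with the categorical empty-vocabulary sentence. You merely spell out the vocabulary bookkeeping that the paper leaves implicit.
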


\begin{proof}
Suppose $\ma$ is \soc. A fortiori, $\ma$ is \wsoc\ and the reduct of $\ma$, namely the cardinality of $A$, is \soc. Conversely, suppose $\ma$ is \wsoc\ and $\theta$ is a \qcat\ sentence true in $\ma$. Suppose $\phi$ is a categorical sentence in the empty vocabulary, true in the reduct $|A|$ of $\ma$. Now $\theta\wedge\phi$ is categorical and true in $\ma$, whence $\ma$ is \soc.
\end{proof}

We have split the concept of second order characterizability of a model to two components. The first component, quasi-categoricity, is something that the model satisfies and that is categorical independently of the cardinality of the model. The second component is the ability of second order logic to capture the cardinality of the model. By separating these two components it becomes clearer why  this particular model is \soc.

Let us now look at more examples of \wsoc\ models:

\begin{example}
Free abelian groups are \wsoc, as the unique size of the free basis determines the isomorphism type and can be detected with second order logic. Many free abelian groups are also \soc\ but, for cardinality reasons, not all. A dense linear order without endpoints  is \wsoc\ if it is complete, as an easy transfinite back-and-forth argument shows. Any well-order of the form $(\kappa,<)$, $\kappa$ a cardinal, is \wsoc.
\end{example}

\begin{example}Every recursively axiomatizable first order theory with only infinite models is finitely axiomatizable in second order logic (\cite{MR0106175}). Every such theory which is $\kappa$-categorical for all uncountable $\kappa$ (or even for all infinite $\kappa$) is \qcat\ in second order logic. We get the following examples of \wsoc\ structures:
\begin{itemize}
\item Algebraically closed fields 
\item Infinite dimensional vector spaces over a fixed finite field
\item Divisible torsion-free abelian groups 
\end{itemize}
\end{example}

The class of models of a \qcat\ second order sentence have a little more potential for a structure theory than the categorical ones, for the models are separated from each other by their cardinality. In principle we can ask, how many models  are there  up to isomorphism? It is not much information but it is something. Set many? Class many?

\begin{definition}
A model is \emph{boundedly \wsoc\ }if it satisfies a \qcat\ sentence which does not have arbitrarily large models. In that case its \emph{height} is the smallest $\kappa$ such that for some \qcat\ sentence $\theta$ satisfied by $\ma$ has only models $\le\kappa$.
\end{definition}

\begin{example}
Every $(V_\kappa,\in)$, where $\kappa$ is strongly inaccessible but has only countably many strongly inaccessibles below, is boundedly \wsoc.
\end{example}

\begin{example}
  Many free abelian groups are boundedly \wsoc. For example, if there are measurable cardinals, the free abelian groups the size of which is less than the first measurable cardinal are boundedly \wsoc, because we can use second order logic to say that there are no measurable cardinals $\kappa$ such that $2^\kappa$ is below the size of the universe. Free abelian groups bigger than the first extendible cardinal are unboundedly \wsoc\ (\cite{MR0295904}).
\end{example}

We used the L\"owenheim number $\ell^2$ to put a bound on the cardinalities of \soc\ structures. We will now introduce another invariant for second order logic, and this one will put a bound on the cardinalities of boundedly \wsoc\ structures: 

\begin{definition}
The {\em Hanf number} $\hoo^2$ of \sol\ is the smallest cardinal $\kappa$  such that if a second order sentence has a model $\ge \kappa$, then it has arbitrarily large models.
\end{definition}

\begin{theorem}
The supremum of the cardinalities of boundedly \wsoc\ models is $\hoo^2$.
\end{theorem}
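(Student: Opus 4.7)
The plan is to establish both inequalities $\sup \le \hoo^2$ and $\hoo^2 \le \sup$, in the same spirit as the analogous theorem for $\ell^2$.

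For the upper direction, suppose $\ma$ is boundedly \wsoc\ witnessed by a \qcat\ sentence $\theta$ with no arbitrarily large models. By the very definition of $\hoo^2$, a second order sentence having a model of size $\ge\hoo^2$ must have arbitrarily large models, so in fact $|A|<\hoo^2$. This gives $\sup\le\hoo^2$ immediately.

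For the lower direction, I would start from an arbitrary $\lambda<\hoo^2$ and produce a boundedly \wsoc\ model of size $\ge\lambda$. By definition of $\hoo^2$, there is a second order sentence $\phi$, in some vocabulary $\tau$, with a model of size $\ge\lambda$ but no arbitrarily large models. I would then pass to the vocabulary $\{<\}$ and take $\theta$ to be the conjunction of the following three clauses: ``$<$ well-orders the universe $M$'', ``the order type of $(M,<)$ equals $|M|$'' (equivalently, no proper initial segment of $(M,<)$ is equinumerous with $M$), and the second order existential ``$\exists \vec R\,\phi(\vec R)$'', where $\vec R$ ranges over tuples of $\tau$-relations on $M$. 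Because the $\tau$-relations are existentially bound and no longer in the signature of $\theta$, any two models of $\theta$ of the same cardinality $\mu$ are both isomorphic to $(\mu,\in)$, so $\theta$ is \qcat. A cardinal $\mu$ supports a model of $\theta$ iff $\phi$ has a model of size $\mu$, so $\theta$ inherits from $\phi$ the absence of arbitrarily large models; and taking any $\mn\models\phi$ with $|N|\ge\lambda$ and imposing a well-ordering of $N$ of length $|N|$ produces a model of $\theta$ of size $\ge\lambda$, which is therefore boundedly \wsoc.

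The main subtle step is precisely this move to vocabulary $\{<\}$: the potentially wild variety of non-isomorphic $\tau$-structures satisfying $\phi$ has to be tamed, and the trick is to hide those $\tau$-relations inside the existential second order quantifier while keeping just the well-ordering in the signature. Once this is in place, \qcat ness, the absence of arbitrarily large models, and the existence of a model of size $\ge\lambda$ all fall out routinely, and the two inequalities combine to give $\sup=\hoo^2$.
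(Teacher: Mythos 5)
Your proof is correct and follows essentially the same route as the paper: bound the supremum above by the definition of $\hoo^2$, then for each $\lambda<\hoo^2$ convert a witnessing sentence $\phi$ into a \qcat\ sentence with the same spectrum by existentially quantifying away the $\tau$-relations. The only cosmetic difference is that the paper works in the empty vocabulary (where quasi-categoricity is automatic) rather than with a well-order of initial type, and adds a harmless closure clause to the spectrum condition; your simpler version works just as well.
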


\begin{proof}
Suppose $\ma$ is boundedly \wsoc\ and $|A|=\kappa$. Let $\theta$ be a \qcat\ sentence, true in $\ma$, such that $\theta$ does not have arbitrarily large models.   Then $\theta$ cannot have a model $\ge\hoo^2$. Hence $\kappa<\hoo^2$. Suppose then $\kappa<\hoo^2$ is arbitrary. Let $\phi$ be a second order sentence which has a model of size $\ge \kappa$ but does not have arbitrarily large models.  Let $\theta$ be the sentence of the empty vocabulary which is true in a model of cardinality $\lambda$ if and only if there are models of $\phi$ of arbitrarily large size $<\lambda$ or of size $\lambda$. The sentence $\theta$ cannot have a model $\ge\hoo^2$. Hence $\theta$
is boundedly \wsoc.  The supremum of the cardinalities of models of $\theta$ is at least $\kappa$.\end{proof}

\section{Non second order characterizable structures}\label{non}

The most obvious reason why there are models that are not \soc\ is  cardinality: there are more non-isomorphic models than there are second order sentences. Another reason is the Axiom of Choice (AC) which implies the existence of objects, such as bases for vector spaces and well-ordering of arbitrary sets, without giving any definition for them. If $V=L$, then the global well-ordering of $L$ can be used to overcome this problematic feature of AC. If $V=L$ is violated, we get incidences of the failure of second order characterizability.

To this end, recall that a \emph{Hamel basis} for  $\oR$ is any set of reals which is a basis in  $\oR$ as  a $\mathbb{Q}$-vector space. The existence of a Hamel basis is a well-known consequence of AC.

\begin{theorem}[\cite{MR3116536}] Suppose $\oP$ is the usual forcing notion for adding one Cohen real. In the forcing extension by $\oP$ the model $(\oR, +, \times, 0, 1, B)$, where $B$ is any Hamel basis for  $\oR$, is not \wsoc.
\end{theorem}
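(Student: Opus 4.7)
The plan is to argue by contradiction. Suppose a quasi-categorical second-order sentence $\theta$ in the vocabulary $\{+,\times,0,1,U\}$ holds of $(\oR,+,\times,0,1,B)$ in $V[\oP]$. The first observation is the rigidity of the ordered field of reals: every field automorphism of $(\oR,+,\times,0,1)$ fixes $\oQ$ pointwise, sends squares to squares and is therefore order preserving, and by $\oQ$-density together with Dedekind completeness must be the identity. Consequently any isomorphism $(\oR,+,\times,0,1,B_1)\cong(\oR,+,\times,0,1,B_2)$ is the identity on $\oR$, so $B_1=B_2$. Combined with the quasi-categoricity of $\theta$, this shows that $B$ is the unique subset of $\oR^{V[\oP]}$ satisfying $(\oR,+,\times,0,1,B)\models\theta$.

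Next I would exploit the homogeneity of Cohen forcing $\oP$. Let $\dot B$ be a $\oP$-name for $B$; by a standard homogeneity argument we may assume that the trivial condition forces $(\oR,+,\times,0,1,\dot B)\models\theta$. Then for every $\pi\in\mathrm{Aut}(\oP)$, since $\pi(\mathbf{1})=\mathbf{1}$, also $\mathbf{1}\Vdash(\oR,+,\times,0,1,\pi(\dot B))\models\theta$. In any $\oP$-generic extension $V[G]$ both $\dot B^G$ and $\pi(\dot B)^G$ are Hamel bases satisfying $\theta$, and the uniqueness from paragraph one forces them to agree. Hence $B$ is invariant under the natural $\mathrm{Aut}(\oP)$-action on $V[\oP]$.

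The last step turns this invariance into a contradiction. The approach is to view a single Cohen real as $\omega$-many mutually Cohen-generic reals $c_0,c_1,\ldots$ by re-indexing the domain of $\oP$ as $\omega\times\omega$; then $\mathrm{Sym}(\omega)$ acts on $\oP$ by permuting the $c_n$. Hereditary symmetry of an appropriately chosen name for $B$ under this action places $B$ inside the basic Cohen symmetric submodel $N\subseteq V[\oP]$, a ZF-model in which, by a classical theorem, no Hamel basis of $\oR$ over $\oQ$ exists, contradicting $B\in N$. The main technical obstacle is the upgrade from the plain $\mathrm{Aut}(\oP)$-invariance of paragraph two to \emph{hereditary} symmetry of a name for $B$: one controls only the top-level name by the invariance argument, and must work harder by restricting to names of finite support and verifying stabilizer conditions at each level. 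A cleaner finish, which I would try first, avoids symmetric submodels altogether by splitting $c$ into two mutually generic Cohen reals $c_0,c_1$, arguing from the invariance that $B\in V[c_0]\cap V[c_1]$, and invoking Solovay's intermediate-model lemma to conclude $B\in V$, so $\mathrm{span}_{\oQ}(B)\subseteq\oR^V$ cannot contain $c$, yielding the desired contradiction.
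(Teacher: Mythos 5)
Your first two steps track the paper's proof: the rigidity of $(\oR,+,\times,0,1)$ turns quasi-categoricity of $\theta$ into a parameter-free second-order definition of $B$ over the field of reals (the paper's $\psi(x)$), and automorphisms of Cohen forcing are indeed the engine of the contradiction. The problem is your final step, in both versions. The ``cleaner finish'' is unsalvageable as stated: the conclusion $B\in V[c_0]\cap V[c_1]$ is not merely underivable from the $\mathrm{Aut}(\oP)$-invariance, it is outright false, since a Hamel basis of $\oR^{V[G]}$ must span $c_1$ over $\oQ$ and therefore contains reals outside $V[c_0]$. The homogeneity lemma you are implicitly invoking --- an invariantly definable object lies in the intermediate model --- applies to sets of ordinals, or more generally to sets all of whose \emph{elements} already lie in $V[c_0]$; here it gives at best $B\cap\oR^{V[c_0]}\in V[c_0]$, which need not be a basis of anything. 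The symmetric-submodel route has the gap you yourself flag (invariance of one top-level name does not yield a hereditarily symmetric name), plus an additional mismatch: the classical theorem concerns Hamel bases of $\oR^{N}$ inside $N$, whereas $B$ is a basis of $\oR^{V[G]}$, which properly contains $\oR^{N}$, so even $B\in N$ would not immediately contradict it.

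The paper's endgame is local rather than global and avoids intermediate models entirely. Let $r^G$ be the Cohen real itself. It has a unique representation $r^G=q_1b_1+\cdots+q_kb_k$ with $q_i\in\oQ$ and $b_i\in B$, and since $B$ is defined by $\psi$ over $(\oR,+,\times,0,1)$ --- a structure and a formula that do not depend on which generic presentation of the same extension is used --- a single \emph{finite} condition $p$ forces this equation together with $\psi(\dot b_i)$ for each $i$. An automorphism of $\oP$ fixing $p$ but altering a digit of $r^G$ beyond the domain of $p$ preserves $\psi$, hence preserves $B$, while perturbing $r^G$ by a fixed dyadic rational; comparing the representations forced before and after applying the automorphism contradicts uniqueness of representation in a basis. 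If you want to complete your argument, you should replace both of your closing moves with something of this finite-condition, single-equation character.
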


\begin{proof} (Sketch)
Suppose the contrary: there is a Hamel basis $B$ such that $(\oR, +, \times, 0, 1, B)$ is \wsoc. It follows that there is a second order formula $\psi(x)$ such that 
$$x\in B\iff(\oR, +, \times, 0, 1)\models\psi(x).$$ 
Let $G$ be a $\mathbb{P}$-generic filter over the ground model $V$.  We define the real $r^G$ so that its binary expansion is 
$
r^G=0.{r_0}^G{r_1}^G{r_2}^G...$
where
${r_i}^G=(\bigcup G)(i).$
There is an equation which represents the real $r^G$  as a linear combination of finitely many elements of the basis $B$ and there is a finite condition which forces this equation. However, automorphisms of the forcing can be used to show that this is impossible.
 \end{proof}

\begin{theorem}[\cite{MR3116536}]\label{v2te}
Assume CH. If we add a Cohen subset to $\omega_1$, then in the forcing extension the following holds: If $T$ is a countable complete unstable first order theory then it has models $\ma$ and $\mb$ of size $\aleph_1$ such that $\ma$ and $\mb$ are $L_{\omega_1 \omega}^{2}$-equivalent but non-isomorphic.
\end{theorem}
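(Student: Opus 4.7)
The strategy combines Shelah's non-structure machinery for unstable first order theories with the strong homogeneity of the forcing $\mathrm{Add}(\omega_1,1)$. Since $T$ is unstable, some formula $\varphi(\bar x,\bar y)$ has the order property, and by the standard Ehrenfeucht--Mostowski construction each linear order $I$ yields a model $M(I)\models T$ of cardinality $|I|+\aleph_0$ generated by indiscernibles $(\bar a_i)_{i\in I}$ linearly ordered by $\varphi$. Shelah's framework provides, on the one hand, combinatorial invariants of $I$ that force $M(I)\not\cong M(J)$, and on the other hand a transfer principle by which $L^2_{\omega_1\omega}$-equivalence of the index orders lifts to $L^2_{\omega_1\omega}$-equivalence of the EM-models. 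The theorem is thus reduced to exhibiting, in the Cohen extension, two linear orders $I_0,I_1$ of size $\aleph_1$ that are non-isomorphic but $L^2_{\omega_1\omega}$-equivalent.

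Working in $V[G]$, where $G\colon\omega_1\to 2$ is $\mathrm{Add}(\omega_1,1)$-generic over $V\models\CH$, the forcing is ${<}\omega_1$-closed and $\omega_2$-cc, hence preserves cardinals and cofinalities and adds no new countable sets. I would assemble $I_0$ and $I_1$ on a common $\omega_1$-shaped skeleton using $G$ and $\omega_1\setminus G$ as decorating instructions---for instance at each stage $\alpha$ inserting an extra block (a copy of $\oQ$, say, or a fixed scattered order) into $I_0$ precisely when $G(\alpha)=0$ and into $I_1$ precisely when $G(\alpha)=1$. Non-isomorphism $I_0\not\cong I_1$ then follows by a density argument: any $\mathrm{Add}(\omega_1,1)$-name for an isomorphism would decide enough of $G$ on a club to contradict genericity. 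This yields $\ma:=M(I_0)\not\cong\mb:=M(I_1)$ through Shelah's translation.

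The main step is the $L^2_{\omega_1\omega}$-equivalence of $\ma$ and $\mb$. I would use the Karp-style back-and-forth characterization: $\ma\equiv_{L^2_{\omega_1\omega}}\mb$ iff for every countable ordinal $\alpha$ there is a nonempty back-and-forth system of partial isomorphisms between countable substructures that can match both first-order moves and second-order moves of rank ${<}\alpha$. Two structural features enable this. First, because each $M(I)$ is generated by its indiscernibles, any second-order relation on $\ma$ is already controlled by a countable sub-skeleton together with countably many parameters. Second, $\mathrm{Add}(\omega_1,1)$ is strongly homogeneous, so any two conditions with matching countable isomorphism type are conjugate by a forcing automorphism; passing such an automorphism through produces countable isomorphisms between matching chunks of $I_0$ and $I_1$, together with a transported relation that serves as Player II's response.

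The hard part is precisely the second-order move in this game. A single second-order move supplies an arbitrary relation $R\subseteq A^n$, which in the extension can encode essentially all of $G$, and Player II must produce $R'\subseteq B^n$ so that the play can extend through every countable rank. The leverage is that at each countable stage the data played so far lives in some intermediate extension $V[G\restriction\gamma]$ with $\gamma<\omega_1$, so the generic tail of $G$ over $V[G\restriction\gamma]$ still sees the remaining tails of $I_0$ and $I_1$ as symmetric, allowing a matching $R'$ to be chosen with the right future extendibility. Arranging this uniformly for all countable ranks, while keeping the chosen relations mutually coherent across stages and faithfully tracking the generic, is the delicate bookkeeping on which the whole argument turns.
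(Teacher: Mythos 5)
The paper only cites this result (to Hyttinen--Kangas--V\"a\"an\"anen) and gives no proof, so I am comparing your plan with the argument in the cited source. You have assembled the right raw materials---Ehrenfeucht--Mostowski models over linear orders built from the generic, non-structure invariants from instability, countable closure and homogeneity of $\mathrm{Add}(\omega_1,1)$---but the mechanism you propose for the $L^2_{\omega_1\omega}$-equivalence is the wrong one, and the step you defer as ``delicate bookkeeping'' is exactly where it breaks. There is no Karp-style back-and-forth characterization of $L^2_{\omega_1\omega}$-equivalence in which Player II survives by matching countable partial isomorphisms against ``second-order moves of rank ${<}\alpha$'': a single second-order quantifier ranges over arbitrary relations on the \emph{whole} uncountable model, and no system of countable approximations controls it. Likewise, the ``transfer principle'' you invoke---that $L^2_{\omega_1\omega}$-equivalence of the index orders lifts through the EM functor---does not exist; EM constructions preserve first-order and $L_{\infty\omega}$-style equivalence via indiscernibility, but second-order equivalence is not preserved by any such functorial construction, since a second-order sentence about $M(I)$ can quantify over relations encoding far more than the skeleton.

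The actual argument is softer and works at the level of the forcing relation rather than inside a game. Since $\mathrm{Add}(\omega_1,1)$ is ${<}\omega_1$-closed, it adds no new countable sets, hence every $L^2_{\omega_1\omega}$-sentence of the extension already lies in the ground model. One arranges names $\dot{\ma}$ and $\dot{\mb}$ for the two models so that for every condition $p$ there is an automorphism $\sigma$ of the forcing with $\sigma(p)$ compatible with $p$ and with $\sigma$ interchanging $\dot{\ma}$ and $\dot{\mb}$ up to forced isomorphism. If some $p$ forced $\phi(\dot{\ma})\wedge\neg\phi(\dot{\mb})$ for a ground-model sentence $\phi$, then $\sigma(p)$ would force $\phi(\dot{\mb})\wedge\neg\phi(\dot{\ma})$, and a common extension of $p$ and $\sigma(p)$ yields a contradiction. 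This disposes of second-order quantification entirely, with no back-and-forth. What remains---and what genuinely uses instability---is the non-isomorphism: non-isomorphism of $I_0$ and $I_1$ does not by itself give $M(I_0)\not\cong M(I_1)$, and your specific decoration (inserting copies of $\oQ$ according to $G$ versus its complement) may well produce isomorphic orders, since $G$ and $\omega_1\setminus G$ are both generic-looking bistationary sets; one needs the Hyttinen--Tuuri/Shelah invariants attached to the order property, which is where the real combinatorial work of the cited proof lives.
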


Note that the model $\ma$ in Theorem~\ref{v2te} is necessarily non-$L^2_{\omega_1\omega}$-characterizable, because it cannot be distinguished from $\mb$ even with an $L^2_{\omega_1\omega}$-theory and not even just among models of cardinality $\aleph_1$. We cannot hope to get a similar result without forcing, provably in ZFC \cite{MR3001545}. 
It is worth noting that completeness and instability of a first-order theory are absolute properties in set theory.

With cardinality arithmetic assumptions we obtain, without forcing, a criterion for every model of a first order theory of size $\kappa$ being  \sock:

\begin{theorem}[\cite{MR3116536}]\label{main}
Let $T$ be a countable complete first-order theory.

\begin{enumerate}[(i)]
\item Suppose that $\kappa$ is a cardinal such that  $2^\lambda<2^\kappa$ for all $\lambda < \kappa$.
If every model of $T$ of size $\kappa$ is \sock\, then $T$ is a shallow, superstable theory without DOP or OTOP.

\item Suppose that  $\kappa$ is a regular cardinal such that $\kappa=\aleph_\alpha$, $\beth_{\omega_1}(|\alpha|+\omega) \leq \kappa$. 
If $T$ is a shallow, superstable theory without DOP or OTOP, then every model of $T$ of size $\kappa$ is \sock\ .   
\end{enumerate}
\end{theorem}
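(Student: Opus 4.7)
The plan is to transfer Shelah's Main Gap dichotomy into the logic $L^2_{\kappa\omega}$: the nonstructure half of the Main Gap gives (i), the structure half gives (ii). For (i), I would argue by contrapositive. Assume $T$ is non-shallow, or non-superstable, or has DOP or OTOP. In each of these four cases Shelah-style nonstructure machinery produces, at cardinality $\kappa$, a family of $2^\kappa$ pairwise non-isomorphic models of $T$ together with rich back-and-forth equivalences among many pairs, built from stationary set invariants or almost disjoint colorings on $\kappa$. The core work is to upgrade these equivalences from the standard $L_{\infty\kappa}$-level to $L^2_{\kappa\omega}$-equivalence: for each flavor of non-classifiability I would construct $\ma,\mb\models T$ of size $\kappa$ with $\ma\not\cong\mb$ together with a winning strategy for player II in an Ehrenfeucht-Fraisse game whose moves are allowed to be subsets of the universe (matching second-order quantification), with finitely many quantifier alternations but $<\kappa$-wide conjunctive branching. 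Once such a pair is in hand, $\ma$ cannot be $L^2_{\kappa\omega}$-characterizable, since any candidate categorical $\theta_\ma$ would be satisfied by $\mb$ as well.

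For (ii) the approach is direct and constructive. Classifiability of $T$ means, by Shelah's structure theorem, that every $\ma\models T$ is determined up to isomorphism by a decomposition over a well-founded tree whose depth is bounded by the countable depth of $T$ (this is where shallowness enters), with labels drawn from a small family of regular types and with independent systems of prime models attached to each node. Given $\ma$ of size $\kappa$, I would write a sentence $\theta_\ma\in L^2_{\kappa\omega}$ that (a) includes the finite axiomatization of $T$; (b) second-order quantifies a skeleton, the independent systems, and the prime-model pieces above them; and (c) transcribes the isomorphism type of the labeled decomposition tree $I(\ma)$ of $\ma$ as a $<\kappa$-long conjunction. The cardinal hypothesis $\beth_{\omega_1}(|\alpha|+\omega)\leq\kappa$ enters precisely at step (c): it ensures that the set of possible decomposition invariants of models of size $\kappa$ has cardinality $\leq\kappa$, so each $I(\ma)$ really can be written out inside $L^2_{\kappa\omega}$. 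The structure theorem then yields $\mb\models\theta_\ma\Rightarrow I(\mb)\cong I(\ma)\Rightarrow\mb\cong\ma$, so $\theta_\ma$ is categorical.

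The hardest step is the upgrade from $L_{\infty\kappa}$-equivalence to $L^2_{\kappa\omega}$-equivalence on the nonstructure side of (i). A second-order existential on $\ma$ can pick any of $2^\kappa$ subsets, and II must mirror this on $\mb$ while respecting the inductive EF data; this forces the non-isomorphic pairs produced by the nonstructure theorems to have an exceptional degree of uniformity, typically obtained via indiscernible tree systems or random-like colorings designed so that no second-order definable relation can distinguish $\ma$ from $\mb$. Executing this uniformly across the four unclassifiable cases, while keeping strict control of the set-theoretic resources permitted by $2^\lambda<2^\kappa$ for $\lambda<\kappa$, is where I expect the bulk of the technical labor to lie, and is presumably the main content of \cite{MR3116536}.
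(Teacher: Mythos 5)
Your plan for part (i) contains a genuine gap, and it is the step you yourself flag as the hardest: the ``upgrade'' from $L_{\infty\kappa}$-equivalence to $L^2_{\kappa\omega}$-equivalence of non-isomorphic pairs is not merely technically demanding, it is provably not available in ZFC. The survey itself points this out: Theorem~\ref{v2te} needs a forcing extension to produce two non-isomorphic $L^2_{\omega_1\omega}$-equivalent models of an unstable theory, and ``we cannot hope to get a similar result without forcing, provably in ZFC'' \cite{MR3001545}. A second-order existential quantifier ranges over all $2^\kappa$ subsets of the universe, and no Ehrenfeucht--Fra\"iss\'e analysis with set moves survives this in general; so the contrapositive route via equivalent non-isomorphic pairs cannot be the proof. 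The hypothesis $2^\lambda<2^\kappa$ for all $\lambda<\kappa$ is the tell: it is a \emph{counting} hypothesis, not a construction hypothesis. The argument in \cite{MR3116536} is: a categorical sentence has only one model up to isomorphism, so distinct isomorphism classes of \sock\ models consume distinct sentences of $L^2_{\kappa\omega}$; there are at most $2^{<\kappa}$ such sentences, and $2^{<\kappa}<2^\kappa$ follows from the hypothesis together with K\"onig's inequality (otherwise $2^\kappa$ would be a supremum of $\kappa$ many smaller cardinals and have cofinality $\le\kappa$); but by Shelah's Main Gap nonstructure theorems, a theory that is non-superstable, non-shallow, or has DOP or OTOP has $2^\kappa$ pairwise non-isomorphic models of size $\kappa$. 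Hence not all of them can be \sock. This is the same style of argument as the Lemma following Ajtai's theorem in Section 5. None of the uniformity machinery you describe is needed, and insisting on it would sink the proof.

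For part (ii) your strategy --- Shelah's structure theorem, decomposition over a well-founded tree of depth $<\omega_1$, and a sentence of $L^2_{\kappa\omega}$ that existentially quantifies the decomposition and pins down its invariant --- is the right one and matches the approach of \cite{MR3116536} in spirit. What you gloss over is step (c): the decorated decomposition tree of a model of size $\kappa$ may itself have $\kappa$ nodes, so it cannot simply be ``written out as a $<\kappa$-long conjunction''; one must instead quantify over the tree second-orderly and express its isomorphism type by induction on the depth, and this is exactly where the regularity of $\kappa$ and the bound $\beth_{\omega_1}(|\alpha|+\omega)\le\kappa$ (which controls the iterated counting of invariants along the $<\omega_1$ levels of depth) do their work. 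So (ii) is an acceptable outline with the main technical content deferred, but (i) needs to be replaced by the counting argument.
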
  

For more results similar to Theorems~\ref{v2te} and \ref{main}, see \cite{MR3116536}.

A different kind of approach to limitations of second order definability is the following:
If $\phi$ is a second order sentence, we define
$$\Mod(\phi)=\{\mm: \mm\models\phi\}.$$ If $\phi$ characterizes a model $\ma$, then $\Mod(\phi)$ is just the class of models isomorphic to $\ma$. It is easy to see that then $\Mod(\phi)$ is $\Delta_2$ in set theory:
$$
\begin{array}{lcl}
\mb\in\Mod(\phi)&\iff & \mb\models\phi\\
\mb\notin\Mod(\phi)&\iff&\mb\models\neg\phi.
\end{array}$$ On the other hand, the set of G\"odel numbers of valid second order sentences is $\Pi_2$-complete (\cite{MR337481}), hence not $\Sigma_2$ and in particular, not $\Delta_2$. Contemplating this fact leads us to the following result:

\begin{theorem}[\cite{Vaananen2012-VNNSOL}]\label{old}
Second order validity is not second order definable on any second order characterizable structure.
\end{theorem}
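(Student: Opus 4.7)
The plan is to run the classical Tarski undefinability-of-truth argument inside the second order theory of $\ma$, using the categorical sentence $\theta_\ma$ to convert any putative definition of validity into a definition of truth on $\ma$. Suppose towards a contradiction that $\ma$ is \soc, witnessed by $\theta_\ma$, and that there is a second order formula $V(x)$ (possibly with parameters from $\ma$) such that $V(x)$ defines on $\ma$ the set of G\"odel numbers of valid second order sentences in the vocabulary of $\ma$. If $\ma$ is finite, the set of valid sentences is infinite while every subset of $A^n$ is finite, so the statement is vacuous; assume $\ma$ is infinite.

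First I would set up a second-order-definable copy of syntax inside $\ma$. Because $\ma$ is infinite, the second order $\exists$-quantifier can guess a unary predicate $N\subseteq A$ together with operations $+_N,\times_N$ satisfying Dedekind's categorical axiomatization of $(\oN,+,\times)$; any two such copies are isomorphic inside $\ma$, so with parameters one obtains a second-order definable G\"odel numbering $\ulcorner\cdot\urcorner$ of second order formulas in the vocabulary of $\ma$, together with second-order definable syntactic operations, including a function $\mathrm{Impl}(x,y)$ that computes the code of the implication of two coded sentences, and a substitution function needed for the diagonal lemma.

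Next I would use $\theta_\ma$ to reduce truth on $\ma$ to validity. As already observed earlier in the paper, for every second order sentence $\phi$ in the vocabulary of $\ma$,
$$\ma\models\phi\iff\;\models\theta_\ma\to\phi,$$
since $\ma$ is, up to isomorphism, the only model of $\theta_\ma$. Define
$$T(x)\;:=\;V(\mathrm{Impl}(\ulcorner\theta_\ma\urcorner,x)).$$
Then $T$ is a second order formula in the vocabulary of $\ma$, and for every second order sentence $\phi$ in that vocabulary,
$$\ma\models\phi\iff\;\models\theta_\ma\to\phi\iff \ma\models T(\ulcorner\phi\urcorner),$$
so $T$ is a truth predicate for the full second order theory of $\ma$ on $\ma$. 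Finally, applying the diagonal lemma (available because substitution is second-order representable on $\ma$) to $\neg T(x)$ yields a sentence $\phi_0$ with $\ma\models\phi_0\leftrightarrow\neg T(\ulcorner\phi_0\urcorner)$, contradicting $\ma\models\phi_0\leftrightarrow T(\ulcorner\phi_0\urcorner)$.

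The main obstacle is the first step: verifying rigorously that the syntactic machinery (G\"odel coding, the operations $\mathrm{Impl}$ and $\mathrm{Sub}$, and ultimately the diagonal lemma) is genuinely second-order definable on $\ma$ with parameters. The categoricity of the internal copy of arithmetic, guaranteed by the second-order Dedekind axioms together with $\theta_\ma$, is exactly what makes these definitions unambiguous and what allows the Tarskian argument to go through uniformly on every \soc\ structure.
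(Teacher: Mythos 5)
Your argument is correct in outline, but it is not the route the paper takes. The paper derives the theorem from a complexity computation in the L\'evy hierarchy: since $\ma$ is \soc, the class $\Mod(\theta_\ma)$ is $\Delta_2$ in set theory, so any set of G\"odel numbers second order definable on $\ma$ is itself $\Delta_2$ ($n$ belongs to it iff \emph{some} model of $\theta_\ma$ satisfies $V(\underline{n})$ iff \emph{every} model of $\theta_\ma$ does), whereas $\Val^2$ is $\Pi_2$-complete, hence not $\Sigma_2$ and a fortiori not $\Delta_2$. You instead complete the remark made earlier in the paper that ``truth in $\ma$ is reduced to validity'': a putative definition $V$ of validity is converted into a truth predicate $T(x)=V(\mathrm{Impl}(\ulcorner\theta_\ma\urcorner,x))$ for the full second order theory of $\ma$, and Tarski's undefinability of truth finishes the job. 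Both approaches work. Yours is more self-contained---it needs only the routine second order definability of an internal copy of $(\oN,+,\times)$ and of the recursive syntactic operations, not the nontrivial $\Pi_2$-completeness of $\Val^2$---and it proves the formally stronger fact that even validity restricted to the vocabulary of $\ma$ is undefinable. The paper's version is shorter given the external complexity result and isolates \emph{why} the theorem holds (second order characterizability caps definable sets of integers at $\Delta_2$), which is also what yields Theorem~\ref{oldmen}.

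Two caveats on your write-up. First, allowing $V(x)$ to carry first order parameters $\bar a$ clashes with the diagonal lemma: the diagonal object built from $\neg T(x,\bar a)$ is a formula with the non-eliminable parameters $\bar a$, not a sentence, so the truth predicate for \emph{sentences} need not apply to its own diagonal unless $\bar a$ happens to be definable. Either read ``definable'' as parameter-free (as the paper implicitly does) or supply an argument eliminating the parameters. Second, the G\"odel numbering should be introduced by quantifying over \emph{all} internal second order Peano structures (using their categoricity and rigidity), rather than by ``guessing'' one and keeping it as a parameter, so that $T$ and the diagonal sentence are genuine parameter-free second order formulas in the vocabulary of $\ma$; you gesture at this, and it is exactly where the uniformity of your argument lives.
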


Another consequence of such a contemplation is:

\begin{theorem}\label{oldmen}
The model $(\omega,+,\times,\Val^2)$, where $\Val^2$ is the set of G\"odel numbers of valid second order sentences, is not second order characterizable.
\end{theorem}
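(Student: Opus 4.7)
The plan is to derive this as a direct corollary of Theorem~\ref{old}, since $(\omega,+,\times)$ is itself second order characterizable (via Dedekind's axiomatization, already recalled in the paper).

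Suppose for contradiction that $(\omega,+,\times,\Val^2)$ is second order characterizable, and let $\phi(V)$ be a categorical second order sentence in the vocabulary $\{+,\times,V\}$, with $V$ a unary predicate symbol, whose unique model up to isomorphism is $(\omega,+,\times,\Val^2)$. Then any structure of the form $(\omega,+,\times,X)$ satisfying $\phi$ must have $X=\Val^2$, since the $\{+,\times\}$-reduct is already rigid (any isomorphism of $(\omega,+,\times)$ onto itself is the identity). Consequently, on the structure $(\omega,+,\times)$, the set $\Val^2$ is defined by the second order formula
\[
\psi(x)\;\equiv\;\exists V\,(\phi(V)\wedge V(x)),
\]
because the existential quantifier $\exists V$ can only be witnessed by $\Val^2$ itself.

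Now $(\omega,+,\times)$ is second order characterizable, so Theorem~\ref{old} applies and says that $\Val^2$ is \emph{not} second order definable on $(\omega,+,\times)$. This contradicts the definition of $\Val^2$ by $\psi(x)$ just obtained, completing the proof.

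The only subtle point — and the step I would check most carefully — is the claim that the witness $V$ in $\exists V\,\phi(V)$ must actually be $\Val^2$ once the $\{+,\times\}$-reduct is fixed as $(\omega,+,\times)$. This uses both the categoricity of $\phi$ and the rigidity of $(\omega,+,\times)$ (no nontrivial automorphisms), both of which are standard. Apart from this, the argument is a clean reduction, and the genuine mathematical content is already packaged in Theorem~\ref{old}, whose proof in turn rests on the $\Pi_2$-completeness of second order validity (\cite{MR337481}) versus the $\Delta_2$ nature of $\mathrm{Mod}(\phi)$ for any characterizing $\phi$.
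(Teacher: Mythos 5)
Your proof is correct, but it is organized differently from what the paper intends. The paper introduces Theorem~\ref{oldmen} as ``another consequence of such a contemplation,'' i.e.\ as a sibling of Theorem~\ref{old}, both flowing directly from the complexity-theoretic observation that $\Mod(\phi)$ is $\Delta_2$ in set theory for any categorical $\phi$ while $\Val^2$ is $\Pi_2$-complete and hence not $\Delta_2$: a categorical sentence $\phi$ for $(\omega,+,\times,\Val^2)$ would yield both a $\Sigma_2$ definition ($n\in\Val^2$ iff \emph{some} model of $\phi$ with standard arithmetic part puts $n$ in its last predicate) and a $\Pi_2$ definition (\emph{every} such model does), a contradiction. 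You instead derive the theorem as a corollary of Theorem~\ref{old}, converting the hypothetical categorical $\phi(V)$ into a second order definition $\exists V(\phi(V)\wedge V(x))$ of $\Val^2$ on the \soc\ structure $(\omega,+,\times)$. This is a sound and genuinely different decomposition: it packages all the set-theoretic complexity content inside Theorem~\ref{old} and adds only the elementary observations that categoricity together with the rigidity of $(\omega,+,\times)$ forces the witness $V$ to be $\Val^2$ itself. You are right to flag the rigidity step as the point to check; it is indeed needed (without it the witness could a priori be the image of $\Val^2$ under a nontrivial automorphism of the reduct), and it holds here since every automorphism of $(\omega,+,\times)$ fixes $0$, $1$ and successor and is therefore the identity. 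The trade-off is that your route depends on this rigidity, which the paper's direct complexity argument does not, while gaining modularity: the same reduction works for any rigid \soc\ arithmetic structure in place of $(\omega,+,\times)$.
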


\section{Categoricity of second order theories}

Second order characterizability is concerned with the expressive power of individual second order sentences. We may ask the same question about second order theories. 
\begin{theorem}\cite{Ajtai}
Suppose $V=L$ and $T$ is a complete second order theory  in a finite vocabulary. If $T$ has a countable model, then $T$ is categorical.
\end{theorem}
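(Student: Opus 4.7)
My plan is to show first that every model of $T$ is countable, and then, using $V=L$, to show that any two countably infinite models of a complete $L^2$-theory in a finite vocabulary are isomorphic.

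For the first step, the statement ``the universe is countable'' is $L^2$-expressible: it says that there exists a linear order $R$ in which every nonempty subset has an $R$-minimum and every element has only Dedekind-finitely many $R$-predecessors. Since $T$ is complete and has a countable model, this sentence lies in $T$, so every model of $T$ is countable. The case in which the models of $T$ are finite is trivial, since completeness then fixes their size and any two structures of a fixed finite size satisfying the same $L^2$-theory are isomorphic. Assume therefore that $\ma,\mb\models T$ are countably infinite; by completeness $\ma\equiv_{L^2}\mb$.

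For the second step I would fix a recursive coding of structures in the vocabulary of $T$ with universe $\omega$ as elements of $2^\omega$. Under $V=L$ there is a $\Sigma^1_2$-definable well-order $<_L$ of $2^\omega$, so for each countable structure $\mn$ the set of reals coding a representative of the isomorphism class of $\mn$ has a $<_L$-least element $c(\mn)\in 2^\omega$; clearly $c(\mn)=c(\mn')$ iff $\mn\cong\mn'$. For each $n\in\omega$ I would write an $L^2$-sentence $\theta_n$ asserting: there exist a binary relation $R$ that well-orders the universe $U$ in order type $\omega$ and a subset $X\subseteq U$ such that, viewing $X$ as a real via $R$, (i)~$X$ codes a structure on $\omega$ in the vocabulary of $T$ that is isomorphic to the ambient structure, (ii)~no $X'$ with $X'<_L X$ satisfies (i), and (iii)~the $n$-th element of $(U,R)$ lies in $X$. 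Clauses~(i) and (iii) are routine; clause~(ii) is the $\Sigma^1_2$-statement from $V=L$, and it is $L^2$-expressible because, once $R$ is fixed, second-order quantification over $\mn$ is at least as strong as quantification over reals. Hence $\mn\models\theta_n$ iff the $n$-th bit of $c(\mn)$ is $1$.

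Combining, $\ma\equiv_{L^2}\mb$ forces $c(\ma)=c(\mb)$ and therefore $\ma\cong\mb$, so $T$ is categorical. The main obstacle I expect is verifying that the $\Sigma^1_2$ formula which defines $<_L$ in $V$ is interpreted correctly inside a countable structure via an internal coding; this is exactly the place where $V=L$ is essential, since under $V=L$ every real is constructible and the $<_L$-comparison is absolute for transitive models containing the countable ordinals that witness it, and such ordinals and reals are all accessible to second-order quantifiers over a countably infinite universe.
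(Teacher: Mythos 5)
Your proof is correct and follows essentially the same route as the paper: under $V=L$ you use the $\Sigma^1_2$ well-order of the reals to single out a canonical ($\prec$-least) code of the isomorphism type and observe that its bits are $L^2$-expressible over a countably infinite structure, so $L^2$-equivalence of countable models forces isomorphism. The only differences are cosmetic --- the paper normalizes the models themselves to the $\prec$-minimal representatives and proves literal equality where you compare canonical codes, and you make explicit the (correct) preliminary step that completeness forces every model of $T$ to be countable, which the paper leaves implicit.
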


\begin{proof} Without loss of generality, the finite vocabulary of $T$ is $\{R\}$, where $R$ is a binary predicate. Since $V=L$, there is a   $\Sigma^1_2$-well-order $\prec$  of $\P(\omega^2)$.
Suppose $\ma$ and $\ma'$ are countable models of $T$. Since $T$ is complete, these two models satisfy the same second order sentences. Without loss of generality, both have $\omega$ as their domain and if $\ma=(\omega,S)$, then $S$ is $\prec$-minimal as an element of $\P(\omega^2)$, among isomorphic models, and similarly for $\ma'$. We show that $\ma=\ma'$ i.e. $R^\ma=R^{\ma'}$. To this end, let $(m,n)\in R^\ma$. We can express $(m,n)\in R^\ma$ in second order logic as follows. Let $\phi_{m,n}(R,N,<)$ be a second order sentence which says that $(N,<)\cong(\omega,<)$ and if $S\subseteq N\times N$ is the $\prec$-least $S$ such that $(N,S)\cong(\omega,R)$, then $(m,n)\in S$.  How to say ``$\prec$-least"? The sentence $\phi_{m,n}(R,N,<)$ determines unique arithmetic operations $+$ and $\times$ on the  $(N,<)$ of its models. Using these arithmetic operations we can write a second order formula $\theta(X,Y)$ with two free second order variables $X,Y$, which says $X,Y\subseteq N$ and $X\prec Y$. We can use $\theta(X,Y)$ to express ``$\prec$-least". The sentence $\exists N\exists\!<\!\phi_{m,n}(R,N,<)$ is true in $\ma$ and hence in $\ma'$. It follows that $(m,n)\in {R}^{\ma'}$.
\end{proof}

\begin{lemma} If $2^{\omega}<2^{\kappa}$, then there is a complete second order theory in a finite vocabulary with a model of cardinality $\kappa$ such that $T$ is not categorical.
\end{lemma}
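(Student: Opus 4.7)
My plan is a pigeonhole / counting argument, contrasting the number of complete second order theories in a fixed finite vocabulary with the number of non-isomorphic structures of cardinality $\kappa$ in that vocabulary.

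First I would count complete second order theories. For any finite vocabulary $\tau$ there are only countably many second order $\tau$-sentences (each being a finite string over a countable alphabet), so the collection of all complete second order $\tau$-theories has size at most $2^{\aleph_0}$.

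Next I would exhibit $2^\kappa$ pairwise non-isomorphic $\tau$-structures of cardinality $\kappa$ for a suitable finite $\tau$. Take $\tau=\{<,P\}$ with $<$ binary and $P$ unary, and for each $S\subseteq\kappa$ let $\ma_S=(\kappa,<,S)$, where $<$ denotes the usual ordinal order on $\kappa$. Because any well-ordering is rigid, an isomorphism $\ma_S\to\ma_{S'}$ must restrict to an automorphism of $(\kappa,<)$, hence to the identity, so $\ma_S\cong\ma_{S'}$ forces $S=S'$. This produces $2^\kappa$ pairwise non-isomorphic $\tau$-structures of cardinality $\kappa$.

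Now the pigeonhole step: assigning to each $S\subseteq\kappa$ the complete second order theory of $\ma_S$ is a map from a set of size $2^\kappa$ into a set of size at most $2^{\aleph_0}$, and by hypothesis $2^{\aleph_0}<2^\kappa$, so this map cannot be injective. Choose $S\neq S'$ with $\ma_S$ and $\ma_{S'}$ satisfying the same complete second order theory $T$. Then $T$ is a complete second order theory in the finite vocabulary $\tau$, has $\ma_S$ as a model of cardinality $\kappa$, and also has the non-isomorphic model $\ma_{S'}$ of cardinality $\kappa$, so $T$ is not categorical.

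The only step that merits any thought is the production of $2^\kappa$ pairwise non-isomorphic structures of size $\kappa$; the rigidity of $(\kappa,<)$ takes care of this cleanly, and every other step is routine counting. A small bonus of the argument is that by pigeonhole some single $T$ must collect at least $2^\kappa/2^{\aleph_0}=2^\kappa$ of the $\ma_S$, so in fact there is a complete second order theory with $2^\kappa$ non-isomorphic models of cardinality $\kappa$.
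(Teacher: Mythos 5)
Your argument is exactly the paper's proof: count the at most $2^{\aleph_0}$ complete second order theories in the vocabulary $\{<,P\}$ against the $2^\kappa$ pairwise non-isomorphic structures $(\kappa,<,S)$, and apply pigeonhole; you even supply the rigidity-of-well-orders detail that the paper leaves implicit. One caveat on your ``bonus'' remark only: the inference that some single theory collects $2^\kappa$ many of the $\ma_S$ is not automatic from $2^{\aleph_0}<2^\kappa$ alone, since $2^\kappa$ could be a union of $2^{\aleph_0}$ classes each of size $<2^\kappa$ when $\mathrm{cf}(2^\kappa)\le 2^{\aleph_0}$; this does not affect the lemma itself.
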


\begin{proof} Let the vocabulary consist of a binary predicate symbol $<$ and a unary predicate symbol $P$.
There are only $2^{\omega}$ different second order theories in vocabulary $\{<,P\}$, but there are $2^\kappa$ non-isomorphic models $(\kappa,<,A)$, $A\subseteq \kappa$. Thus there are non-isomorphic $(\kappa,<,A)$ and $(\kappa,<',A')$ such that the second order theory of $(\kappa,<,A)$ is the same as the second order theory of $(\kappa,<',A')$.
\end{proof}

\begin{theorem}\cite{Ajtai} Assume $V=L$ and $\oP$ is Cohen forcing for adding a new real. Suppose $G$ is $\oP$ generic over $V$. Then there is a second order theory $T$ in a finite vocabulary such that in $V[G]$ the theory $T$ is complete, with a countable model, but non-categorical.
\end{theorem}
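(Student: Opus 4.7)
The plan is to exhibit in $V[G]$ two non-isomorphic countable structures with identical second order theory and let $T$ be their common theory. The construction will rely on the weak homogeneity of Cohen forcing together with a bit-flipping automorphism of the forcing poset.

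Identify $\mathbb P = \mathrm{Fn}(\omega,2)$ and let $c := \bigcup G \subseteq \omega$. Working in the finite vocabulary consisting of a unary function symbol $s$ (interpreted as successor $n\mapsto n+1$) and a unary predicate symbol $P$, set
\[
\mathcal A = (\omega, s, c), \qquad \mathcal B = (\omega, s, c\triangle\{0\}).
\]
Both are countable structures in $V[G]$. They are non-isomorphic because $(\omega,s)$ is rigid: any $\pi$ with $\pi\circ s = s\circ\pi$ satisfies $\pi(n)=\pi(0)+n$, and bijectivity of $\omega$ forces $\pi=\mathrm{id}$; hence an isomorphism $\mathcal A\to\mathcal B$ would have to be the identity, yielding $c = c\triangle\{0\}$, which is absurd.

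The heart of the plan is to show $\mathcal A\equiv^2\mathcal B$ in $V[G]$. Consider the involution $\sigma:\mathbb P\to\mathbb P$ flipping the $0$-th bit: $\sigma(p)(0)=1-p(0)$ when $0\in\mathrm{dom}(p)$ and $\sigma(p)(n)=p(n)$ otherwise. This is a $\mathbb P$-automorphism fixing $1_\mathbb P$ and sending the canonical Cohen name $\dot c$ to $\dot c\triangle\{0\}$. If $\dot{\mathcal A}$ is the canonical name for $(\omega,s,\dot c)$, then $\sigma(\dot{\mathcal A})^G = (\omega,s,\sigma(c)) = \mathcal B$. Now, for any second order sentence $\phi$ in the vocabulary $\{s,P\}$, the weak homogeneity of Cohen forcing (applicable because $\dot{\mathcal A}$ is a ground model name) reduces second order truth in $V[G]$ to a forcing-theoretic statement:
\[
V[G]\models\phi(\mathcal A) \iff 1_\mathbb P\Vdash\phi(\dot{\mathcal A}),
\]
the forcing-automorphism lemma gives $1_\mathbb P\Vdash\phi(\dot{\mathcal A})$ iff $1_\mathbb P\Vdash\phi(\sigma(\dot{\mathcal A}))$ (since $\sigma$ fixes $1_\mathbb P$), and weak homogeneity once more converts this into $V[G]\models\phi(\mathcal B)$. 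Hence $\mathcal A$ and $\mathcal B$ satisfy the same second order sentences in $V[G]$.

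Setting $T = \mathrm{Th}^2_{V[G]}(\mathcal A)$ produces a complete second order theory in the finite vocabulary $\{s,P\}$ with non-isomorphic countable models $\mathcal A$ and $\mathcal B$, which is what is required. The hypothesis $V = L$ is not used in the construction itself; its role is only to provide, via the preceding Ajtai theorem, the contrast that no such $T$ can already exist in $V$, so that the non-categoricity is genuinely produced by the forcing. The principal delicacy I expect is the clean deployment of weak homogeneity to the name $\dot{\mathcal A}$: this is what converts the forcing symmetry between $\dot{\mathcal A}$ and $\sigma(\dot{\mathcal A})$ into a bona fide $L^2$-equivalence between their two interpretations in $V[G]$.
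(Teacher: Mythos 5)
There is a fatal gap: your two structures are not second order equivalent --- they are not even \emph{first order} equivalent. Since $(\omega,s)$ is rigid, every element of $\omega$ is first order definable; in particular $0$ is the unique non-successor, so the first order sentence $\exists x\,(\forall y\,(s(y)\neq x)\wedge P(x))$ holds in $\mathcal A=(\omega,s,c)$ iff $0\in c$ and in $\mathcal B=(\omega,s,c\triangle\{0\})$ iff $0\notin c$. The rigidity you invoke to get non-isomorphism is precisely what makes the predicate fully visible to the logic. The error is located in the step ``$V[G]\models\phi(\mathcal A)\iff 1_{\mathbb P}\Vdash\phi(\dot{\mathcal A})$'': weak homogeneity lets $1_{\mathbb P}$ decide statements all of whose parameters are \emph{canonical} names $\check x$ of ground-model sets, whereas $\dot c$ is merely a name lying in the ground model (as all names do) and is moved by the automorphisms. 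Concretely, the sentence above is forced one way by the condition $p(0)=1$ and the other way by $p(0)=0$, so it is not decided by $1_{\mathbb P}$, and your chain of equivalences collapses at its first link. The automorphism lemma $1\Vdash\phi(\dot{\mathcal A})\iff 1\Vdash\phi(\sigma(\dot{\mathcal A}))$ is true but vacuous here, since neither side need hold.

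The paper's proof is designed exactly to evade these two obstacles. It attaches to $X\subseteq\omega$ not $X$ itself but the family $F_X$ of all finite modifications of $X$, forming $\ma_X=(\omega\cup F_X,<,R)$; this structure is \emph{literally unchanged} when $X$ is altered on a finite set, so the finitely much information contained in any single condition $p$ can be absorbed: if $p\Vdash\phi(\ma_{\dot G})$, one passes to a finite modification $G'$ of $G$ whose complement is a generic extending $p$, concluding $\phi$ holds in $\ma_{\omega\setminus G'}=\ma_{\omega\setminus G}$. And instead of a single bit flip (which would produce the \emph{same} model $\ma$, hence no non-isomorphism) it compares $G$ with $\omega\setminus G$, which are both generic but not finite modifications of each other, giving genuinely non-isomorphic models. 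If you want to salvage your approach, you must likewise blur the coding of the real so that the structure is invariant under finite changes yet still distinguishes $G$ from $\omega\setminus G$; coding $c$ exactly into a rigid structure cannot work.
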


\begin{proof} For any $X\subseteq \omega$ let $F_X=\{Y\subseteq\omega :
|Y\triangle X|<\omega\}$ and let $\ma_X$ be the model $(\omega\cup F_X, <, R)$, where $<$ is the natural order of $\omega$ and $R$ is the binary relation $\{(n,Y) : n\in Y, Y\in F_X\}$. Without loss of generality, $G$ is a subset of $\omega$. Note that $\omega\setminus G$ is also in $V[G]$ and is $\oP$-generic over $V$. Clearly $\ma_G\not\cong \ma_{\omega\setminus G}$. However,  we now show that $\ma_G$ and $\ma_{\omega\setminus G}$ are second oder equivalent. Suppose $p$ forces a second oder sentence $\phi$ to be true in $\ma_G$. Let $G'$ be a finite modification of $G$ such that $p\in \omega\setminus G'$. Note that $\omega\setminus G'$ is $\oP$-generic over $V$. Hence $\phi$ is true in $\ma_{\omega\setminus G'}$. But $\ma_{\omega\setminus G}=\ma_{\omega\setminus G'}$. So $\phi$ is true in $\ma_{\omega\setminus G}$, as desired.

\end{proof}

As the above results demonstrate the question of categoricity of complete second order theories is highly non-trivial. For more results in this direction we refer to \cite{Solo} and \cite{VaaWoo}.


\section{What is left out?}

The following topics clearly belong to the general area of the model theory of second order logic, in addition to the topics treated in this paper:

\begin{itemize}
\item {\bf Model theory of Henkin models:}  If second order logic is endowed with the Henkin semantics, it resembles first order logic. However, to give the second order quantifiers some non-trivial power, and to thereby distinguish second order logic from two-sorted first order logic, one usually assumes the Comprehension Axioms of \cite{zbMATH03029084}.  The effect of the Comprehension Axioms is that any non-trivial theory becomes unstable. See e.g. \cite{MR3116536} for concrete results manifesting this.
\item {\bf Second order logic and  large cardinals}. Second order logic has a special relationship with measurable, supercompact and extendible cardinals, see \cite{MR143710} and \cite{MR0295904}. These large cardinals play the same role in second order model theory as $\aleph_1$ plays in first order model theory.
\item {\bf Model theory of particular logics:} The model theory of several sublogics of second order logic have been studied, e.g. the generalized quantifier $Q_1$ \cite{MR263616}, the cofinality quantifier $Q^{cf}_\omega$ \cite{MR376334}, the Magidor-Malitz quantifier $Q^{MM}_1$ \cite{MR453484}, and the stationary logic $L(aa)$ \cite{MR486629,MR622785}. 
\item {\bf First order model theory of structures with  second order properties:} Rather than studying the model theory of full second order logic, or the model theory of an entire sublogic of second order logic, one may study first order model theory of particular second order sentences, see e.g. \cite{MR501097,MR501098,MR506531,MR625527,MR725733}.

\end{itemize}


\end{document}